\newcommand{\N}{\mathbb{N}}
\newcommand{\Z}{\mathbb{Z}}
\newcommand{\pp}{\mathbb{P}}
\newcommand{\kO}{\mathcal{O}}
\newcommand{\kF}{\mathcal{F}}
\newtheorem {lem} {Lemma} [section]
\newtheorem {theo} {Theorem} [section]
\newtheorem {cor} {Corollary} [section]
\title[A $0-1$ law for VRRW  with weight of order $k^\alpha$.]
      {A $0-1$ law for vertex-reinforced random walks on $\Z$ with weight of order $k^\alpha$, $\alpha<1/2$.}
\author{Bruno SCHAPIRA}
\address{D\'epartement de Math\'ematiques, B\^at. 425, Universit\'e Paris-Sud 11, F-91405 Orsay, cedex, France. }
\email{bruno.schapira@math.u-psud.fr}
\begin{document}

\begin{abstract} We prove that Vertex Reinforced Random Walk on $\Z$ with weight 
of order $k^\alpha$, with $\alpha\in [0,1/2)$, is either almost surely recurrent or almost surely transient. 
This improves a previous result of Volkov who showed that the set of sites which are visited infinitely often was a.s. either empty or infinite. 
\end{abstract} 

\maketitle

\section{Introduction}
Linearly vertex reinforced random walks (VRRW for short), introduced by Pemantle in \cite{P}, were first studied on $\Z$ by Pemantle and Volkov \cite{PV},  
who showed that with positive probability these processes spend all large times on just five sites. 
Some times later, Tarr\`es \cite{T} managed to prove that this striking phenomenon, called localization, occurs in fact almost surely 
(and he recently gave a simplified proof in \cite{T2}). 
Roughly in the mean-time Volkov \cite{V1} proved that (linearly) VRRW localize as well on a 
large class of graphs with positive probability and almost surely on trees. Bena\"im and Tarr\`es \cite{BT} have recently 
generalized his result to a larger class of walks, with a completely different proof.

In the recent works \cite{ETW1, ETW2}, new models of self-interacting random walks are introduced, 
where the interaction is not restricted to nearest neighbors. Then the authors prove that localization can occur on sets of arbitrary size, depending on the 
parameters of the model.

What emerges from these remarkable results is the fact that, when studying self-interacting (or non Markovian) 
random walks on graphs, the first thing one should do is to determine 
the set of vertices which are visited infinitely often and see if this is empty, the whole graph or some nontrivial subgraph.  According to Volkov's notation \cite{V2} we shall denote this set by $R'$ here. 
If it is the whole graph, we say that the walk is recurrent 
and if it is empty we say that the walk is transient. But as we just saw, it might be equal to something else and even have arbitrary size.

In 2006 Volkov started the study of VRRW on $\Z$ with some weight $(w_k,k\ge 0)$. 
Such process, say $(X_n,n\ge 0)$, is defined as follows. First $X_0=0$. Then for all $n\ge 0$, on the event $\{X_n=x\}$,
\begin{eqnarray}
\label{loiX}
\pp(X_{n+1} = X_n \pm 1\mid \kF_n) = \frac{w_{Z_n(x\pm 1)}}{w_{Z_n(x-1)}+w_{Z_n(x+1)}},
\end{eqnarray}
where $(\kF_n,n\ge 0)$ is the natural filtration of $X$ and for all $y\in \Z$, 
$$Z_n(y) = \#\{ m\le n \ :\ X_m = y\},$$
is the local time in $y$ at time $n$. Linearly VRRW correspond to the case when $w_k=k+1$ for all $k\ge 0$. 
Volkov \cite{V2} showed that when $\sum_k 1/w_k$ is finite, then $X$ almost surely localizes on two sites, 
i.e. that $R'$ has a.s. cardinality $2$, and when $w_k \asymp (k+1)^{\alpha}$,\footnote{we say that $f_k\asymp g_k$ when 
$f_k/g_k$ is bounded from above and below by positive constants} for some $\alpha\in [0,1)$, then a.s. $R'$ cannot be nonempty and finite (actually he
proved this result under slightly more general hypotheses, see \cite{V2} for details). 
He conjectured also that the process should be a.s. recurrent, i.e. that $R'$ should be a.s. equal to $\Z$  (Problem 3 in \cite{V2}).  
It is even natural to believe that this should hold as soon as 
$w_k=\kO(k^\alpha)$, for some $\alpha<1$, and not only when $w_k$ is exactly of order $k^\alpha$, for some fixed $\alpha$. 
However, to our knowledge, no progress on this conjecture has been made since then, even in the case $\alpha=0$. Here we obtain the following result:
\begin{theo} 
\label{theo1} 
Assume that there exists some $\alpha\in [0,1/2)$, such that $w_k\asymp (k+1)^\alpha$. 
Then the VRRW on $\Z$ with weight  
$(w_k,k\ge 0)$ is either a.s. recurrent or a.s. transient. 
\end{theo}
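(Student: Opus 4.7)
The plan is to combine Volkov's classification with two further ingredients. By Volkov's theorem, $R'$ is a.s.\ either empty or infinite; since the walk is nearest-neighbor, on $\{R'\neq\emptyset\}$ any two sites of $R'$ force every integer between them to belong to $R'$, so $R'$ is a.s.\ an interval of $\Z$. Hence a.s.\
\[
R'\in\{\emptyset,\;\Z,\;(-\infty,b]\cap\Z,\;[a,+\infty)\cap\Z\}
\]
for some (random) integer endpoints, and the theorem reduces to two statements: (I) each one-sided possibility has probability zero, and (II) $\pp(R'=\Z)\in\{0,1\}$.

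For (I), suppose $\pp(R'=[a,+\infty)\cap\Z)>0$ for some $a\in\Z$. On this event there is a random time $T$ after which the walk stays in $[a,+\infty)$, so $Z_n(a-1)$ is frozen at some value $L$ while $Z_n(a+1)\to\infty$. If $v_k$ denotes the $k$-th visit to $a$ after $T$, the conditional probability of jumping to $a-1$ at $v_k$ equals $p_k=w_L/(w_L+w_{Z_{v_k}(a+1)})$. Between consecutive visits to $a$ the walk enters and leaves $a+1$ at least once, giving $Z_{v_k}(a+1)\geq 2k$ and thus the easy bound $p_k=\kO(k^{-\alpha})$. The real work is to obtain a matching lower bound $p_k\geq ck^{-\gamma}$ for some $\gamma<1$: once that is done, $\sum_k p_k=\infty$ and a conditional Borel--Cantelli argument forces the walk to return to $a-1$ almost surely, contradicting $R'=[a,+\infty)\cap\Z$. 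The needed lower bound requires quantitative control on how fast the right-excursions from $a$ can inflate $Z(a+1)$; this is where the assumption $\alpha<1/2$ is expected to enter decisively, ensuring sub-diffusive behaviour of a one-sided reinforced walk.

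For (II) the idea is that $\{R'=\Z\}$ is insensitive to any finite prefix of the walk. Fix a finite stopping time $\sigma$: after $\sigma$, the process is a VRRW started at $X_\sigma$ with a modified but finite initial local-time profile, and $\{R'=\Z\}$ is entirely determined by this future. I would set up a coupling between two such walks started from two different finite profiles to show that with probability one they eventually agree on whether $R'=\Z$. Consequently $\pp(R'=\Z\mid\kF_\sigma)$ is a.s.\ constant, and L\'evy's $0$--$1$ law yields $\pp(R'=\Z)\in\{0,1\}$.

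The main obstacle will be step (I). The jump probabilities $p_k$ at $a$ are of order $k^{-\alpha}$ and hence not summable for $\alpha<1$, so a cheap unconditional Borel--Cantelli is unavailable; one must produce sharp quantitative control on the lengths of excursions of a one-sided reinforced walk, and this is precisely where the restriction $\alpha<1/2$ is expected to be essential.
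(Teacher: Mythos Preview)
Your high-level decomposition into (I) ruling out one-sided $R'$ and (II) a $0$--$1$ law for $\{R'=\Z\}$ is reasonable, but both steps are left as programmatic assertions rather than arguments, and in each case the paper's route is substantively different from what you sketch.

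For (I), the gap is real. To get $p_k\ge c\,k^{-\gamma}$ with $\gamma<1$ you would need an \emph{upper} bound $Z_{v_k}(a+1)\le C\,k^{\gamma/\alpha}$, i.e.\ quantitative control on the lengths of right-excursions from $a$. You give no mechanism for this, and it is not clear one exists: the one-sided reinforced walk on $[a,\infty)$ whose excursions you would have to control is essentially the object whose recurrence is in question. In fact the paper's own estimates go in the opposite direction: on $\{T_a(m)<T_{a-1}\}$ one shows $Z(a+1)$ must be \emph{at least} of order $m^{1/\alpha-\epsilon}$, which yields only $p_k\le C\,k^{-1+\alpha\epsilon}$ and says nothing about a lower bound. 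The paper never bounds $p_k$ from below. Instead it runs a domino cascade (Lemma~\ref{leminduction1}): if $a$ is visited $m$ times without visiting $a-1$, then with overwhelming probability $Z(a+1)\ge m^{\gamma_2}$; feeding this back, $Z(a+2)\ge m^{\gamma_3}$, and so on, with $\gamma_{i+1}-\gamma_i\asymp(1/\alpha-1)^i$. For $\alpha<1/2$ these exponents blow up, which is impossible in finite time, and one obtains the uniform quantitative bound $\sup_{x\le 0}\pp[T_x(m)<T_{x-1}]\le e^{-\kappa m^{r\alpha}}$. No excursion-length estimates or ``sub-diffusive behaviour'' enter.

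For (II), the coupling you allude to is not constructed, and for a non-Markovian process it is far from automatic that two walks started from different finite local-time profiles can be coupled so as to eventually agree on $\{R'=\Z\}$; this would itself be a theorem. The paper proceeds differently and, crucially, re-uses the quantitative output of step (I). Via Lemma~\ref{gene1} one shows that if $\pp(X_n>0\ \forall n>0)=0$ then $0$ is a.s.\ visited infinitely often, hence $R'=\Z$ a.s.\ by step (I). If instead $\pp(X_n>0\ \forall n>0)>0$, the exponential bound from (I) gives $\sum_{x\ge 0}\pp[T_x(x)<T_{x+1}]<\infty$, and two Borel--Cantelli arguments then force a.s.\ transience. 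Thus the exponential estimate in (I) is not incidental: it is precisely what drives (II), and your proposed route produces nothing comparable.
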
 
This result says that $R'$ is either a.s. empty or a.s. equal to $\Z$. This first step toward Volkov's conjecture, called Problem 1 in \cite{V2}, 
gives strong evidence that the conjecture should be true, at least when $\alpha<1/2$. 
Indeed since the process is "reinforced" it should be "more" recurrent than simple random walk and since it does not localize it should be recurrent. 
However, giving a rigorous proof to this kind of monotonicity argument (even formulating a correct statement) is still a real challenge.

For other results on VRRW, particularly on finite graphs, we refer the reader to \cite{B, BT, LV, P}. 
We shall also mention that analogous results have been obtained in a continuous setting, 
for self-interacting diffusions, see \cite{CLJ, HR, R}.

Our proof is different from Volkov's proof, which was based on urns arguments and on Rubin's construction.
We use instead a kind of domino principle, which works roughly as follows. Assume that 
some site $x\le 0$ is visited infinitely often, but not $x-1$, and let us fix some small constant $\epsilon>0$. 
Then at $k$-th visit to $x$, with $k$ large, 
the local time in $x+1$ has to be at least of order $k^{1/\alpha-\epsilon}$. Otherwise, 
$X$ would have jumped roughly $k^{\alpha \epsilon}$ times on $x-1$, which is not allowed if $k$ is large. 
Then we repeat this argument and show
that before the $k^{1/\alpha-\epsilon}$-th visit to $x+1$, the local time in $x+2$ has to be at least of order $k^\gamma$, 
with $\gamma =1/\alpha +(1/\alpha-1)^2-\epsilon/\alpha$. Otherwise, during the $k^{1/\alpha-\epsilon}$ visits to $x+1$, 
$X$ would have jumped more than $k$ times to $x$. By repeating this argument infinitely often, we get 
that the local time in $x+i$ has to be of order $k^{\gamma_i}$, with $\gamma_i$ of order $(1/\alpha-1)^i$, for all $i\ge 1$. 
This is of course not possible before the time of $k$-th visit to $x$, and we get a contradiction. However this argument only works when 
$\gamma_i \to \infty$, when $i\to \infty$, which explains why we need the hypothesis $\alpha<1/2$.   
Then we deduce that a.s. $R'$ is either empty or equal to $\Z$, see Sections \ref{secinduction} and \ref{alpha<1/2} for more details. 
To see that there is really a $0-1$ law, we use the general Lemma \ref{gene1} below, which enables us to conclude with Borel-Cantelli like arguments.

\section{A general lemma} 
Let us introduce some new notation. For any $w=(w_k(x))_{x\in \Z,k\ge 0}$, denote by $\pp_w$ 
the law of the VRRW in the "environement" $w$. This process is defined as in \eqref{loiX} except that in the right hand side we replace 
$w_{Z_n(x\pm 1)}$ by $w_{Z_n(x\pm 1)}(x\pm 1)$. 
\begin{lem} 
\label{gene1} If $0$ has positive probability under $\pp$ to be visited only finitely many times, then for all $w\in (0,\infty)^{\Z\times \N}$, such that  
$w_k(x) = w_k$ for all $x\ge 0$ and $k\ge 0$, the probability under $\pp_w$ that $0$ is visited only at time $0$ is also positive. In particular $\pp(X_n>0 \ \forall n>0)>0$. 
\end{lem}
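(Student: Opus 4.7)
The plan is to split the proof into two steps. First, I reduce the general statement to the ``in particular'' form $\pp(X_n>0 \ \forall n>0)>0$. Second, I prove this statement from the hypothesis.

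\emph{Reduction.} For any admissible $w$, I will use $\{X_n\neq 0 \ \forall n\ge 1\}\supseteq\{X_n>0 \ \forall n\ge 1\}$ and condition on $X_1$:
\[
\pp_w(X_n>0 \ \forall n\ge 1)=\pp_w(X_1=1)\cdot\pp_w(X_n>0 \ \forall n\ge 1\mid X_1=1).
\]
The first factor equals $w_0/(w_0+w_0(-1))>0$ by positivity of $w$. On the event in the second factor the walk lives in $\{1,2,\dots\}$ from time $1$ onward, and its transitions only invoke weights $w_k(x)$ at sites $x\ge 0$, which coincide with $w_k$ under both $\pp$ and $\pp_w$. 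Hence the second factor equals its analogue under $\pp$, which by reflection symmetry ($\pp(X_1=1)=1/2$) equals $2\,\pp(X_n>0 \ \forall n>0)$. The ``in particular'' statement then yields the general conclusion.

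\emph{The ``in particular'' statement.} I argue by contrapositive: assume $\pp(X_n>0 \ \forall n>0)=0$. By reflection symmetry of $\pp$, also $\pp(X_n<0 \ \forall n>0)=0$, hence $\pp(T_1<\infty)=1$ where $T_1=\inf\{n\ge 1:X_n=0\}$. The goal is to derive $\pp(0 \text{ visited finitely often})=0$, contradicting the hypothesis. To this end I would prove by induction on $k$ that $\pp(T_k<\infty)=1$, which gives $\pp(0 \text{ visited finitely often})=\lim_k\pp(T_k=\infty)=0$. By the strong Markov property at $T_k$ and the Markov structure of the pair $(X_n,Z_n)$, the inductive step reduces to the claim that for a.e.~realization of $\ell:=Z_{T_k}$, the escape probability $q(\ell):=P_\ell(T_1=\infty)$ of a fresh VRRW started at $0$ with initial local-time profile $\ell$ equals $0$, given the base case $q(Z_{T_0})=0$.

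The main obstacle lies in this propagation step. Since VRRW is not monotone in its initial local times, one cannot directly dominate $q(Z_{T_k})$ by $q(Z_{T_0})$. A careful argument is required, likely combining reflection symmetry of the homogeneous environment with the excursion structure of the walk around $0$ (each excursion being entirely positive or entirely negative), and exploiting the flexibility of $w$ afforded by the lemma to absorb into the weight environment the local-time contribution of excursions on one half-line. A Borel--Cantelli-type aggregation, as suggested in the introduction, should close the argument. I expect this propagation step to be the crux of the proof.
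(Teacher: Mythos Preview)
Your reduction step is correct and clean: conditioning on $X_1=1$ and observing that on $\{X_n>0\ \forall n\ge 1\}$ only weights at sites $x\ge 0$ are ever consulted is exactly right, and it does reduce the general $\pp_w$ statement to the homogeneous ``in particular'' statement.

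The gap is in your second step. You correctly isolate the obstacle --- from $\pp(X_n>0\ \forall n>0)=0$ you must propagate to $q(\ell)=0$ for every profile $\ell$ reachable at a return time --- but you do not close it, and your proposed mechanism does not work. Absorbing the accumulated local times into the environment produces $\hat w_k(x)=w_{k+\ell(x)}$; after any positive excursion the profile $\ell$ is nontrivial on nonnegative sites, so $\hat w$ no longer satisfies $\hat w_k(x)=w_k$ for $x\ge 0$, and you cannot feed it back into your reduction. Even after a purely negative excursion the offset at $x=0$ is nonzero, and the weight at $0$ is read whenever the walk sits at $1$, so the conditional laws under $\pp$ and $\pp_{\hat w}$ genuinely differ. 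The Borel--Cantelli remark in the introduction refers to the proof of the main theorem, not to this lemma.

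The paper avoids the difficulty by a direct construction rather than a contrapositive. From the hypothesis and symmetry one gets $M$ with $\pp(X_n>0\ \forall n\ge M)>0$; conditioning on the first $M$ steps yields a fixed path $(x_0,\dots,x_M)$ with $x_M>0$ such that the conditional probability of $\{X_n>0\ \forall n>M\}$ is positive. This conditional probability is the same under any $\pp_w$ (same reason as in your reduction), so $\pp_w(X_n>-M\ \forall n\ge 0)>0$ for every admissible $w$. One bootstrap then upgrades this to $\pp_w(X_n>0\ \forall n>0)>0$: force the first $M$ steps to be $0\to 1\to\cdots\to M$, translate by $M$, and apply the previous bound to the environment $w'$ with $w'_k(x)=w_{k+1}$ for $x<0$, which encodes the single prior visit to each of the old sites $0,\dots,M-1$. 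No induction over returns to $0$ is needed.
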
 
\begin{proof} By using a symmetry argument we know that there exists some $M>0$ such that $\pp(X_n>0 \textrm{ for all }n\ge M) >0$.  
By conditioning now with respect to the first $M$ steps, we see that there must exist some sequence $(x_0,\dots,x_M)$, with $x_M>0$, such that conditionally on $E=\{(X_0,\dots,X_M)=(x_0,\dots,x_M)\}$, the probability that $X_n>0$ for all $n>M$ is positive. But for any such sequence and any $w$ as in the lemma, we have $\pp_w(E)>0$, and since $w_k(x) = w_k$ when $x\ge 0$, we have 
$$\pp_w(X_n>0 \textrm{ for all }n>M \mid E) = \pp(X_n>0 \textrm{ for all } n>M \mid E)>0.$$
Note that if $X$ follows the path $(x_0,\dots,x_M)$ during the first $M$ 
steps and after stays on the right of $0$, then certainly it always stays on the right of $-M$. 
Thus we also have 
\begin{eqnarray}
\label{infM}
\pp_w(X_n > -M \textrm{ for all }n\ge 0) >0.
\end{eqnarray}
Now if $w'=(w'_k(x))_{k\ge 0,x\in \Z}$ is such that for all $k\ge 0$, $w'_k(x) = w_{k+1}$ if $x<0$, and $w'_k(x)=w_k$ if $x\ge 0$, then by using the Markov property we get 
$$
\pp_w(X_n>0\ \forall n>0)\ge \pp_w(X_1=1,\dots,X_M=M)\pp_{w'}(X_n > -M\ \forall n\ge 0).$$
The first probability on the right hand side is positive (since $w_0>0$ by hypothesis), and it follows from \eqref{infM}, with $w'$ in place of $w$, that the second one is also positive. This finishes the proof of the lemma. 
\end{proof}

\section{An induction argument and a new proof of Volkov's result}
\label{secinduction}
We first present a kind of domino principle. 
In plain words it works as follows.  
Assume that there exists some $x\in \Z$, such that $\inf R'=x$. It means that $x$ is visited infinitely often, but not $x-1$. 
To simplify assume even that $x-1$ has never been visited. 
Fix some large integer $k$ and let $n$ be the time of $k$-th visit to $x$. 
Then at each of the $k$ first visits to $x$, the process has probability at least of order $1/Z_n(x+1)^\alpha$ to jump to $x-1$. 
Since it did not, this implies with high probability that $Z_n(x+1)$ is at least of order $k^{1/\alpha}$. 
The idea is then to repeat the argument. More precisely the next lemma implies by induction that the local time in $x+i$ at time $n$ 
is of order at least $k^{\gamma_i}$, with $\gamma_i=\sum_{j=0}^{i}(1/\alpha-1)^j$, up to some error term and with probability going 
to $1$ exponentially fast when $k\to \infty$. In particular when $\alpha < 1/2$ the error term is negligible and we get a contradiction, 
since the process $X$ cannot visit an infinite number of sites before time $n$. See the next subsection for details. 
When $\alpha$ is larger than or equal to $1/2$, the error term becomes predominant when $i\to \infty$, and the argument blows up.  
However, it still implies that $R'$ cannot be finite, which gives an alternative proof to Volkov's result, see Corollary \ref{R'infini} below.

\vspace{0.2cm} 
\noindent Now for $x\in \Z$ and $k\ge 1$, set
$$T_x(k) = \inf\{n\ge 0\ :\ Z_n(x)\ge k\}.$$ 
Denote also by $T_x:=T_x(1)$ the hitting time of $x$.  
In the following each time we consider an event of the type $\{T<T'\}$, for two random times $T$ and $T'$, we implicitely assume that 
it is contained in the set $\{T<\infty\}$. 
\begin{lem} 
\label{leminduction1}
Assume that there exists some $\alpha \in [0,1)$, such that $w_k\asymp (k+1)^\alpha$. 
Then there exist constants $c>0$ and $C>0$, such that for all $x\in \Z$, all $\gamma>1$, all $\epsilon\in (0,\alpha)$ and all $k \ge e^{C/\epsilon}$,      
$$\pp\left[T_{x+1}(k^\gamma) < T_x(k)\wedge T_{x+2}(k^{\frac{\gamma-1}{\alpha} +1-\epsilon}) \right]\le 
\exp\left(-c\, \frac{k^{1-\alpha}}{|\ln \epsilon|^{1/(1-\alpha)}}\right).$$
\end{lem}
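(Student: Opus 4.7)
The plan is to analyze the sequence of Bernoulli choices made at visits to $x+1$. For $j \geq 1$, let $B_j \in \{0,1\}$ indicate whether the $j$-th such visit is followed by a jump to $x$ (value $0$) or to $x+2$ (value $1$), and set $N_j := \sum_{i \leq j}(1-B_i)$. Write $m := k^{(\gamma-1)/\alpha + 1 - \epsilon}$. On the event $E$ of the lemma, for every $j \leq k^\gamma$ one has $Z_j(x+2) < m$ and $Z_j(x) \geq N_j$, the latter because every jump $x+1 \to x$ creates a new visit to $x$. The hypothesis $w_k \asymp (k+1)^\alpha$ then yields the adaptive lower bound
\[
\pp[\,B_{j+1} = 0 \mid \kF_j\,] \;=\; \frac{w_{Z_j(x)}}{w_{Z_j(x)}+w_{Z_j(x+2)}} \;\geq\; c\,\frac{(N_j+1)^\alpha}{m^\alpha},
\]
which is self-reinforcing in $N_j$: each additional jump to $x$ raises the chance of the next.

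I would exploit this feedback by stochastically dominating the hitting time $T := \inf\{j : N_j = k\}$ by a sum of independent geometrics. Namely, the time $\tau_\ell$ spent by $N$ at level $\ell$ is dominated by $\tilde\tau_\ell \sim \mathrm{Geom}(c(\ell+1)^\alpha/m^\alpha)$, the $\tilde\tau_\ell$ taken independent across $\ell$. Setting $\tilde T := \sum_{\ell=0}^{k-1} \tilde\tau_\ell$, one has $E \subset \{T > k^\gamma\} \subset \{\tilde T > k^\gamma\}$, while
\[
\E\tilde T \;\asymp\; \frac{m^\alpha}{c}\sum_{\ell=0}^{k-1}(\ell+1)^{-\alpha} \;\asymp\; k^{\gamma - \alpha\epsilon},
\]
so $E$ corresponds to a deviation of $\tilde T$ above its mean by a multiplicative factor $k^{\alpha\epsilon}$.

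The final step is a Chernoff estimate on $\tilde T$. Choosing $\lambda$ of order $1/m^\alpha$ (the largest admissible value, dictated by $\lambda < \min_\ell c(\ell+1)^\alpha/m^\alpha$), one finds $\log\E[e^{\lambda\tilde\tau_\ell}] \lesssim (\ell+1)^{-\alpha}$, whose sum over $\ell < k$ is $O(k^{1-\alpha})$; meanwhile the drift contributes $-\lambda k^\gamma \asymp -k^{1-\alpha+\alpha\epsilon}$. Under the hypothesis $k \geq e^{C/\epsilon}$ with $C$ large enough, the drift dominates and the overall exponent is at most $-c\, k^{1-\alpha+\alpha\epsilon}$, certainly below the stated $-c\, k^{1-\alpha}/|\ln\epsilon|^{1/(1-\alpha)}$. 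The main obstacle I expect is checking the coupling carefully: in the real walk $Z_j(x)$ and $Z_j(x+2)$ can also grow through excursions via $x-1$ or $x+3$, but on $E$ the constraint $Z_j(x+2) < m$ persists throughout, so the adaptive lower bound on $\pp[B_{j+1}=0 \mid \kF_j]$ survives and the geometric domination goes through.
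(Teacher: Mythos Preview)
Your approach is correct and in fact yields the slightly stronger bound $\exp(-c\,k^{1-\alpha+\alpha\epsilon})$, as you note. The coupling with a sum of independent geometrics is sound: on the event $E$ the bound $Z_{\sigma_{j+1}}(x+2)<m$ persists for every $j\le k^\gamma$, and since $Z_{\sigma_{j+1}}(x)\ge N_j$ one obtains the adaptive lower bound on $\pp[B_{j+1}=0\mid \kF_{\sigma_{j+1}}]$ uniformly along the relevant trajectory; extending the process beyond $T_{x+2}(m)$ by independent coins with the minimal rate makes the domination $T\le_{\mathrm{st}}\tilde T$ hold globally, and $E\subset\{T\ge k^\gamma\}$ does the rest. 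Your Chernoff computation with $\lambda\asymp m^{-\alpha}$ is also right: the cumulant sum is $O(\lambda\sum_\ell p_\ell^{-1})=O(k^{1-\alpha})$ while the linear term is $\lambda k^\gamma\asymp k^{1-\alpha+\alpha\epsilon}$, and $k\ge e^{C/\epsilon}$ makes the latter dominate. (One tiny caveat: when $\gamma<1+\alpha\epsilon$, i.e.\ $m<k$, the denominator $m^\alpha$ should be replaced by $k^\alpha$; but then the deterministic constraint $k^\gamma-1\le Z(x)+Z(x+2)<k+m<2k$ already forces $E=\emptyset$ for the relevant range of $k$, so this regime is harmless.)

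The paper takes a different, more elementary route. Instead of coupling with geometrics, it discretises the $k^\gamma$ visits to $x+1$ into $K\asymp |\ln\epsilon|/|\ln\alpha|$ blocks of sizes $N_1,\dots,N_K$ and bootstraps: in block~$i$ it uses only the \emph{constant} lower bound $p_i$ on the jump-to-$x$ probability inherited from the local time accumulated in previous blocks, applies a Bernoulli Chernoff bound to get $Z(x)\ge N_ip_i/2$ with high probability, and then feeds $p_{i+1}=c_0(N_ip_i/2)^\alpha k^{-\alpha\gamma'}$ into the next block. After $K$ iterations $N_Kp_K/2>k$, contradicting $A_0$. The price of this coarse-graining is the extra factor $|\ln\epsilon|^{1/(1-\alpha)}$ in the exponent (coming from the product $\prod_j(K-j+2)^{\alpha^{i-j}}$), which your continuous argument avoids. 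On the other hand, the paper's proof needs only the standard Chernoff bound for i.i.d.\ Bernoulli sums and no stochastic-domination machinery, so it is marginally more self-contained. Both arguments exploit the same self-reinforcement mechanism; yours does so step-by-step, the paper's in $K$ lumps.
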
 
\begin{proof}
Let $\epsilon\in (0,\alpha)$ and $\gamma>1$ be given.
Consider the event 
$$A_0:=\left\{T_{x+1}(k^\gamma) < T_x(k)\wedge T_{x+2}(k^{\gamma'}) \right\},$$
where $\gamma':=(\gamma-1)/\alpha +1-\epsilon$. 
Set $K=[3\ln \epsilon/\ln \alpha]$.   
For $i=1,\dots,K$, set 
$$t_i:=T_{x+1}\left(\frac{k^\gamma}{(K-i+1)^2}\right),$$
and for $i\ge 2$,
$$N_i= \frac{k^\gamma}{(K-i+1)^2}-\frac{k^\gamma}{(K-i+2)^2}.$$
Set also $N_1=k^\gamma/K^2$. Note that by hypothesis, if $C>0$ is large enough, 
\begin{eqnarray}
\label{Ni}
N_i\ge k^\gamma/(K-i+2)^3\ge 1,
\end{eqnarray}
for all $i\le K$, and thus $t_i<t_{i+1}$. 
Moreover, since $w_k \asymp k^\alpha$,  
there exists some constant $c_0\in (0,1)$, such that for all $i_0<j_0$, all $i\ge i_0$ and all $j\le j_0$, $w_i/(w_i+w_j)\ge c_0i_0^\alpha/j_0^\alpha$. 
In particular before time $T_x(k)\wedge T_{x+2}(k^{\gamma'})$, at each visit to $x+1$, 
the probability to jump to $x$ is larger than $p_1:=c_0/k^{\alpha \gamma'}$. 
Thus if $t_1<T_x(k)\wedge T_{x+2}(k^{\gamma'})$, as it is the case on the event $A_0$ for instance, then 
the number of jumps from $x+1$ to $x$ before time $t_1+1$ stochastically dominates the sum of $N_1$ independent Bernoulli random variables with parameter $p_1$. 
Therefore,    
\begin{eqnarray*}
\pp\left[A_0,\ Z_{1+t_1}(x)\le N_1p_1/2\right] \le \exp(-c_1N_1p_1),
\end{eqnarray*}
for some constant $c_1>0$. 
Define next inductively $p_2,\dots,p_K$, and $A_1,\dots,A_{K+1}$, by 
$$p_{i} = c_0(N_{i-1}p_{i-1}/2)^\alpha k^{-\alpha\gamma'},$$
for $i\in \{2,\dots,K\}$, and 
$$A_{i}:=A_0\cap \left\{Z_{1+t_i}(x)\ge N_ip_i/2\right\},$$ 
for $i\in \{1,\dots,K+1\}$. 
Now by using the same argument as above, we immediately get by induction that 
\begin{eqnarray}
\label{Ai}
\pp\left[A_{i-1},\ Z_{1+t_i}(x)\le N_ip_i/2 \right] \le \exp\left(-c_1 N_ip_i\right),
\end{eqnarray}
for all $i\in \{2,\dots,K+1\}$. It is also straightforward to prove by induction, and by using \eqref{Ni}, that 
\begin{eqnarray}
\label{Nipi}
N_ip_i \ge \frac{2(c_0/2)^{1+\alpha+\dots+\alpha^i}}{\left(\prod_{j=1}^i (K-j+2)^{\alpha^{i-j}}\right)^3} k^{1-\alpha^i+\alpha\epsilon},
\end{eqnarray}
for all $i\le K$. On the other hand it is immediate that 
$$\sup_K \prod_{j=1}^K (K-j+2)^{\alpha^{K-j}}<\infty.$$
Thus there exists a constant $c'>0$ such that 
$$N_Kp_K \ge c' k^{1-\alpha^K+\alpha\epsilon}.$$
By taking now $\epsilon\ge C/\ln k$, with $C$ large enough, we deduce that $N_Kp_K/2>k+1$. 
Since on $A_0$, $t_K=T_{x+1}(k^\gamma)\le T_x(k)$, we get that $A_{K+1}$ is empty. 
Finally note that for all $i\le K$, 
$$\prod_{j=1}^i (K-j+2)^{\alpha^{i-j}}\le (K+1)^{1/(1-\alpha)},$$
so we also deduce from \eqref{Ai} and \eqref{Nipi} that   
\begin{eqnarray*}
\pp[A_0] &\le & \sum_{j=1}^{K+1} \pp\left[A_{i-1},\ Z_{1+t_i}(x)\le N_ip_i/2 \right] \\
&\le & (K+1)\exp(-c_2k^{1-\alpha}/|\ln \epsilon|^{1/(1-\alpha)}) \\
&\le &\exp(-ck^{1-\alpha}/|\ln \epsilon|^{1/(1-\alpha)}),
\end{eqnarray*} 
for some positive constants $c_2$ and $c$. This finishes the proof of the lemma. 
\end{proof} 
\noindent We can now give an alternative proof to Volkov's result, in the case when $w_k$ is of order $k^\alpha$. 
\begin{cor} 
\label{R'infini}
Assume that there exists $\alpha\in [0,1)$, such that $w_k\asymp (k+1)^\alpha$. Then a.s. $|R'|\in \{0,\infty\}$. 
\end{cor}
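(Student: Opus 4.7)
We argue by contradiction. Suppose $\pp(0<|R'|<\infty)>0$. Since $X$ is a nearest-neighbor walk, $R'$ is almost surely an interval of $\Z$, so on the event $\{0<|R'|<\infty\}$ one may write $R'=[a,b]$; by countability we may fix $a\le b$ for which $\pp(B)>0$, where $B:=\{R'=[a,b]\}$. On $B$ the sites $a-1$ and $b+1$ are visited only finitely many times, while $a,a+1,\dots,b$ are each visited infinitely often.

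The plan is to iterate Lemma~\ref{leminduction1} rightward from $a$ to produce, a.s.\ on $B$ for all $k$ large enough, a finite telescoping chain
\begin{equation*}
T_{b+1}\bigl(k^{\beta_{b-a+1}}\bigr)\ \le\ T_b\bigl(k^{\beta_{b-a}}\bigr)\ \le\ \dots\ \le\ T_{a+1}\bigl(k^{\beta_1}\bigr)\ \le\ T_a(k),
\end{equation*}
with strictly positive exponents $\beta_i$. The outermost inequality forces $\sup_n Z_n(b+1)\ge k^{\beta_{b-a+1}}\to\infty$ as $k\to\infty$, contradicting $b+1\notin R'$. Because the probability bound in Lemma~\ref{leminduction1} is summable over $k$ (since $1-\alpha>0$), a Borel--Cantelli argument applies at each step.

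The inductive step at level $i\ge 1$ applies Lemma~\ref{leminduction1} centered at $a+i-1$ with the substitution $k\mapsto k^{\beta_{i-1}/\gamma}$ for a free parameter $\gamma>1$ (with the convention $\beta_0:=1$); Borel--Cantelli then yields, a.s.\ for all large $k$, the dichotomy
\begin{equation*}
T_{a+i-2}\bigl(k^{\beta_{i-1}/\gamma}\bigr)\le T_{a+i-1}\bigl(k^{\beta_{i-1}}\bigr)\quad\text{or}\quad T_{a+i}\bigl(k^{\beta_{i-1}\gamma'/\gamma}\bigr)\le T_{a+i-1}\bigl(k^{\beta_{i-1}}\bigr),
\end{equation*}
where $\gamma':=(\gamma-1)/\alpha+1-\epsilon$. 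At $i=1$ the left alternative is ruled out on $B$ because $a-1$ is visited only finitely often (taking $\gamma$ very large yields $\beta_1$ arbitrarily close to $1/\alpha$). For $i\ge 2$, the inductive hypothesis $T_{a+i-1}(k^{\beta_{i-1}})\le T_{a+i-2}(k^{\beta_{i-2}})$ turns the left alternative into $\beta_{i-1}/\gamma\le \beta_{i-2}$, which is incompatible with the choice $\gamma<\beta_{i-1}/\beta_{i-2}$. The right alternative then provides the next chain inequality with $\beta_i:=\beta_{i-1}\gamma'/\gamma$. Optimizing $\gamma$ at the boundary $\gamma=\beta_{i-1}/\beta_{i-2}$ leads to the recurrence $\beta_i\approx(\beta_{i-1}-\beta_{i-2})/\alpha+\beta_{i-2}$, whose solution, with $u:=1/\alpha-1>0$, is $\beta_i=1+u+u^2+\dots+u^i>0$.

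The main obstacle is the exponent bookkeeping: the auxiliary parameter $\epsilon>0$ must be chosen small enough, uniformly over the finitely many iterations $i\le b-a+1$, so that the accumulated $\epsilon$-errors in the recurrence do not drive any $\beta_i$ below zero. Because only finitely many iterations are needed (one past $|R'|$), the hypothesis $\alpha<1$ suffices here, in contrast to Theorem~\ref{theo1}, which requires $\beta_i\to\infty$ and hence $\alpha<1/2$.
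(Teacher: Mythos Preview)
Your proof is correct and follows essentially the same route as the paper: iterate Lemma~\ref{leminduction1} along the putative finite interval $R'=[a,b]$ to produce a chain $T_{a+i}(k^{\beta_i})\le T_{a+i-1}(k^{\beta_{i-1}})$ with the same exponent recurrence $\beta_i\approx \beta_{i-2}+(\beta_{i-1}-\beta_{i-2})/\alpha$, and reach a contradiction at $b+1$; choosing a single small $\epsilon$ works because only $b-a+1$ iterations are needed. The only cosmetic difference is that the paper packages the argument as a single union bound showing $\pp(B)\le \sum_i \pp[\text{step $i$ fails}]\to 0$ as $m\to\infty$, whereas you apply Borel--Cantelli at each step and chain the resulting almost-sure dichotomies on $B$.
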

\begin{proof} 
Fix some $x\in \Z$ and some integers $N\ge 1$ and $z_0>0$. 
We want to prove that the event $\{Z_\infty(x)=\infty\}\cap \{Z_\infty(x-1)<z_0\}\cap \{Z_\infty(x+N)\le 1\}$ 
has probability zero, with the convention $Z_\infty(y):=\lim_{n\to \infty }Z_n(y)$, for all $y\in \Z$. 
 
\vspace{0.2cm}
\noindent For this first observe that for any $\epsilon<1/\alpha$, and any $m\ge 1$, 
\begin{eqnarray*}
\pp\left[ T_x(m)< T_{x+1}(m^{1/\alpha - \epsilon}) \wedge T_{x-1}(z_0)\right] &\le & \pp\left[\sum_{j=1}^m \xi_j \le z_0\right]
\le e^{-c(z_0)\ m^{\epsilon \alpha}}, 
\end{eqnarray*}
where $c(z_0)$ is some constant and the $\xi_j$'s are i.i.d. Bernoulli random variables with parameter $c'm^{-1+\epsilon\alpha}$, 
for some other constant $c'>0$.

\vspace{0.2cm}
\noindent Now define $\gamma_1,\dots, \gamma_N$, 
by $\gamma_1=1$, $\gamma_2=1/\alpha -\epsilon$, and for $i\ge 1$, 
$$\gamma_{i+2} = \gamma_i(1-\epsilon) +\frac{1}{\alpha}(\gamma_{i+1}-\gamma_i).$$
Note already, that if $\epsilon$ is small enough, then $\gamma_{i+1}>\gamma_i$, for all $i\le N-1$. 
Thus, as soon as $m$ is large enough, we can apply Lemma \ref{leminduction1} 
with $k=m^{\gamma_i}$ and $\gamma=\gamma_{i+1}/\gamma_i$, for any $i\in\{1,\dots,N-1\}$, and we get  
$$\pp\left[T_{x+i}(m^{\gamma_{i+1}})< T_{x+i-1}(m^{\gamma_i})\wedge T_{x+i+1}(m^{\gamma_{i+2}})\right]\le   
\exp\left(-c\, \frac{m^{\gamma_i(1-\alpha)}}{|\ln \epsilon|^{1/(1-\alpha)}}\right),
$$
where $c$ is the constant appearing in Lemma \ref{leminduction1}. Then, 
\begin{eqnarray*}
&& \pp\left[\{Z_\infty(x)=\infty\}\cap \{Z_\infty(x-1)<z_0\}\cap \{Z_\infty(x+N)\le 1\}\right] \\
& = & \pp\left[\cap_{m\to \infty}\, \{T_x(m)<T_{x-1}(z_0)\} \cap\{Z_\infty(x+N)\le 1\}\right]\\
&= &  \lim_{m\to \infty}\ \pp\left[\{T_x(m)<T_{x-1}(z_0)\} \cap\{Z_\infty(x+N)\le 1\}\right]\\
&\le & \lim_{m\to \infty}\ \Big\{ \pp\left[ T_x(m)< T_{x+1}(m^{1/\alpha - \epsilon}) \wedge T_{x-1}(z_0)\right]\\
&&  +\sum_{i=1}^{N-1} \pp\left[T_{x+i}(m^{\gamma_{i+1}})< T_{x+i-1}(m^{\gamma_i})\wedge T_{x+i+1}(m^{\gamma_{i+2}})\right]\Big\} \\
&=& 0,
\end{eqnarray*}
as wanted. Since this is true for any $x$, $N\ge 1$ and $z_0$, this proves the corollary. 
\end{proof}

\section{Proof of Theorem \ref{theo1}}
\label{alpha<1/2} 
\noindent We assume in this section that $\alpha <  1/2$.  

\vspace{0.2cm}
\noindent For $x\le 0$ and $m\ge 1$, consider the event 
$$E_x(m):=\left\{T_x(m)<T_{x-1}\right\}.$$ 
Then for $i\ge 1$, set 
$\epsilon_i = r/i^2$, with $r>0$ some positive constant which will be fixed later. 
Consider the sequence $(\gamma_i,i\ge 1)$ defined inductively by $\gamma_1 = 1$, $\gamma_2=(1/\alpha)-r$ and for $i\ge 1$, 
$$\gamma_{i+2} = \gamma_i(1-\epsilon_i) +\frac{1}{\alpha}(\gamma_{i+1}-\gamma_i).$$
Set 
$$F_x(m):=\left\{T_{x+i}(m^{\gamma_{i+1}}) < T_{x+i-1}(m^{\gamma_i})\quad \textrm{for all }i\ge 1\right\}.$$
Let us show that if $r$ is small enough, then
\begin{eqnarray}
\label{Hinduction1}
\sup_{x\le  0} \ \pp\left[F_x(m)^c \cap E_x(m)\right] =\kO\left(e^{-\kappa\,  m^{r\alpha}}\right),
\end{eqnarray}
as $m\to \infty$, for some constant $\kappa>0$. 
For this note that for all $i\ge 1$, 
$$\gamma_{i+2}-\gamma_{i+1} = (\frac 1\alpha-1)(\gamma_{i+1}-\gamma_i) -\epsilon_i\gamma_i,$$
so by induction we get
\begin{eqnarray}
\label{relrecgammai}
\gamma_{i+2}-\gamma_{i+1} = (\gamma_2-\gamma_1)(\frac 1 \alpha -1)^{i} - \sum_{j=1}^i \epsilon_j \gamma_j (\frac 1 \alpha -1)^{i-j}.
\end{eqnarray}
In particular $\gamma_{i+2}-\gamma_{i+1} \le (1/\alpha-1)^{i+1}$, for all $i\ge 1$, 
which implies $\gamma_i\le C_0(1/\alpha-1)^i$, for some constant $C_0>0$. 
Since $\sum1/i^2<\infty$, we see from \eqref{relrecgammai} that if $r>0$ is small enough, 
then there exists a constant $c_0>0$, such that 
$$\gamma_{i+2}-\gamma_{i+1}\ge c_0(\frac 1\alpha -1)^i.$$
Thus $\gamma_{i+2}\ge c_0(1/\alpha -1)^i$, for all $i\ge 1$, 
and since $\alpha<1/2$, $\gamma_i$ grows exponentially fast with $i$. Therefore, as soon as $m$ is large enough, 
we can apply Lemma \ref{leminduction1} with 
$k=m^{\gamma_i}$, $\gamma=\gamma_{i+1}/\gamma_i$ and $\epsilon=\epsilon_i$, for all $i\ge 1$. 
Then we get
\begin{eqnarray*}
\pp[F_x(m)^c\cap E_x(m)] &\le & \pp\left[T_x(m)< T_{x+1}(m^{\gamma_2}) \wedge T_{x-1}(z_0)\right] \\ 
& &+   \sum_{i\ge 1} \pp\left[T_{x+i}(m^{\gamma_{i+1}})< T_{x+i-1}(m^{\gamma_i})\wedge T_{x+i+1}(m^{\gamma_{i+2}})\right] \\
&\le &  e^{-\kappa\, m^{r \alpha}} 
+  \sum_{i\ge 1} \exp\left(-c\, \frac{m^{\gamma_i(1-\alpha)}}{|\ln \epsilon_i|^{1/(1-\alpha)}}\right),
\end{eqnarray*}
where $c$ is the constant appearing in Lemma \ref{leminduction1}, and $\kappa$ some other constant, see 
the proof of Corollary \ref{R'infini}. Since $\gamma_i$ grows exponentially fast with $i$, \eqref{Hinduction1} follows. 
But for any $x\le 0$ and any $m\ge 1$, the event $F_x(m)\cap E_x(m)$ is empty since $X$ 
cannot visit infinitely many sites in finite time.  
This proves that 
\begin{eqnarray}
\label{Exm} 
\sup_{x\le 0} \pp[E_x(m)] = \kO\left(e^{-\kappa \, m^{r\alpha}}\right),
\end{eqnarray}
as well, when $m\to \infty$. This proves in particular that for all $x\le 0$, $\pp[\cap_m E_x(m)]=0$. This means that a.s. the process 
cannot visit i.o. $x$ and never $x-1$. Actually the proof shows as well that for any $x\in \Z$, a.s. the process cannot visit
$x$ i.o. and only finitely many times $x-1$. Similarly, if $E'_x(m):=\{T_x(m)<T_{x+1}\}$, for $x\ge 0$, then 
\begin{eqnarray}
\label{E'xm} 
\sup_{x\ge 0} \pp[E'_x(m)] = \kO\left(e^{-\kappa \, m^{r\alpha}}\right),
\end{eqnarray}
as $m\to \infty$, and we can see that a.s. the process cannot visit i.o. some $x\in \Z$, and only finitely many times $x+1$.  
In other words, we just proved that a.s. either $R'=\Z$ or $R'=\emptyset$. 

\vspace{0.2cm}
\noindent Now observe that by using Lemma \ref{gene1}, if $\pp[X_n>0\ \forall n>0]=0$, then we know that $0$ is a.s. visited infinitely often. 
So with the result we just have proved, we know that in this case a.s. $R'=\Z$. So it only remains to consider the case 
when $\pp[X_n>0\ \forall n>0]>0$, which we assume now. We will prove that in this case the process is a.s. transient.  
To this end, note that \eqref{Exm} and \eqref{E'xm} show that 
$$\sum_{x\ge 0} \pp[\{T_x(x)<T_{x+1}\}\cup \{T_{-x}(x)<T_{-x-1}\}] < +\infty.$$
Thus according to Borel--Cantelli's lemma, a.s. for $x$ large enough, either $T_{x+1}<T_x(x)$ or $T_{-x-1}<T_{-x}(x)$.
For $n\ge 1$, denote by $x_n$ the $n$-th site visited by $X$, such that $T_{x_n}< T_{x_n-1}(x_n-1)$ and $x_n>0$,  
or $T_{x_n}<T_{x_n+1}(|x_n+1|)$ and $x_n<0$. Note that $|x_n|$ is at most of order $n$, 
so that for all $n$, if for instance $x_n> 0$, then $X$ has probability 
of order at least $n^{-\alpha}$ to jump to $x_n+1$ at time $T_{x_n}$, and similarly if $x_n<0$. 
Hence, 
$$\sum_{n\ge 1}\ \pp\left[T_{x_n+1} = T_{x_n}+1 \mbox{ or }  T_{x_n-1} = T_{x_n}+1 \mid \kF_{T_{x_n}}\right] = \infty.$$
It then follows from Levy's conditional Borel--Cantelli's lemma (see for instance Lemma 5.1 in \cite{T2}), that a.s. 
for infinitely many $n\ge 1$, either $T_{x_n+1} = T_{x_n}+1$ (if $x_n>0$) or $T_{x_n-1} = T_{x_n}+1$ (if $x_n<0$). 
But each  time this happens, by using our assumption we see that, independently of $\kF_{T_{x_n}}$, 
$X$ has some positive probability $p>0$ to never come back to $x_n$ after time $T_{x_n}$. It follows that a.s. 
this happens infinitely often, which proves well that $X$ is a.s. transient, as wanted.  \hfill $\square$

\vspace{0.2cm} 
\noindent \textit{Acknowledgments: I warmly thank an anonymous referee for having pointed out   
a serious mistake in a previous version of this paper.}

\end{document}